\documentclass{amsart}
\usepackage{amsmath,amsfonts,amssymb,graphicx}

\setlength{\textheight}{21.6cm}

\setlength{\textwidth}{14cm}

\setlength{\oddsidemargin}{1cm}

\setlength{\evensidemargin}{1cm}
\newtheorem{thm}{Theorem}[section]
\newtheorem{cor}[thm]{Corollary}
\newtheorem{lem}[thm]{Lemma}

\theoremstyle{definition}
\newtheorem{defn}[thm]{Definition}
\theoremstyle{remark}

\newtheorem{ex}[thm]{Example}
\numberwithin{equation}{section}

\begin{document}

\title[Two bounds on the  noncommuting graph]
{Two bounds on the noncommuting graph}

\author[S. Nardulli]{Stefano Nardulli}
\address{\newline
Instituto de Matem\'atica, Universidade Federal do Rio de Janeiro,  Av. Athos da Silveira Ramos 149, Centro de Tecnologia,  Bloco C, Cidade Universit\'aria, Ilha do Fund\~ao,  Caixa Postal 68530, 21941-909,  Rio de Janeiro, Brasil.}
\email{nardulli@im.ufrj.br}

\author[F.G. Russo]{Francesco G. Russo}
\address{
\newline
Department of Mathematics and Applied Mathematics, University of Cape Town, Private Bag X1, Rondebosch 7701,
Cape Town, South Africa.}
\email{francescog.russo@yahoo.com}

\subjclass[2010]{Primary:   05C22,   20D15 ; Secondary: 58E35,  53C23. }
\keywords{Noncommuting graph ;  Sobolev--Poincar\'e inequality; Laplacian operator ; isoperimetric inequality}
\date{\today}


\begin{abstract}Erd\H{o}s introduced the  noncommuting graph, in order to  study the number of commuting elements in a finite group. Despite the use of combinatorial ideas, his methods involved several techniques of classical analysis.
The interest for this graph is becoming relevant in the last years for various reasons. Here we deal with a numerical aspect, showing for the first time an isoperimetric inequality  and an analytic  condition in terms of  Sobolev inequalities. This  last result holds in the more general context of weighted locally finite  graphs.
\end{abstract}

\maketitle

\section{Terminology and preliminary notions}

If  $\Gamma$  denotes a locally finite  graph (i.e.: each vertex of $\Gamma$ has a finite number of neighbors) with  vertex set $V$ and  edge set $E$, two elements  $x, y \in V$ are in the relation $x \sim y$ if $x$ and $y$ are adjacent and joined by an edge $xy$. For a subset $\Omega \subseteq V$,  \[\partial \Omega= \{xy \ | \ x \in \Omega \ \mathrm{and} \ y \in V-\Omega   \},\] is the set of edges which join a vertex in $\Omega$ with a vertex outside $\Omega$.  In presence of an orientation,  each edge in $\partial \Omega$ is oriented so that it points outwards from $\Omega$.  To  $\Gamma$, we associate  \textit{the edge weight} $\sigma_{xy}>0$ for  each  $xy \in E$, so for any $S \subseteq E$ we define the measure \[\sigma(S)=\sum_{xy\in S}\sigma_{xy}.\] Extending the function $\sigma_{xy}$ by zero to those $x, y$ which are not neighbors, we get a symmetric function from $V \times V$ to $]0,+\infty[$. It will be also useful to introduce \textit{the vertex weight}
\[\mu_x: x \in V \longmapsto \mu_x=\sum_{y : y \sim x} \sigma_{xy} \in ]0,+\infty[.\]
In case $\sigma_{xy}=1$ for all $xy \in E$ (for instance, in unweighted  graphs),  \[\mu_x=\mathrm{deg}(x)=|\{yx \ | \ y \sim x\}|\]  is the \textit{degree} of $x$, that is, the number of neighbors of the vertex $x$. On the other hand, it is well defined the positive measure \[\mu : \Omega \subseteq V \ \longmapsto \mu(\Omega)=\sum_{x \in \Omega} \mu_x \in ]0,+\infty[.\]
If $\Gamma$ is equipped with  $\sigma$ and $\mu$ as above,  we say that it is a  \textit{weighted graph}. In particular, if $\Gamma_G$ is the noncommuting graph of a finite group $G$ (i.e.: recall from \cite{aam} that $\Gamma_G$  is defined by vertices $x,y \in G-Z(G)=V$ joined by an edge $xy \in E$ if $x$ do not commute with $y$), there is neither weight  nor orientation, so 
$\mu(\Omega)=\sum_{x \in \Omega} \mathrm{deg}(x)$ and  $\sigma(\partial \Omega)=|\partial \Omega|$.
Important contributions on $\Gamma_G$ can be found in  \cite{aam,   dar,  mog}, but the reader may refer  to \cite{fgr} for a recent survey \footnote{This graph appears originally in certain combinatorial problems in group theory, related to conjectures of  Erd\H{o}s on the number of commuting elements in a group (see \cite{neum}). A probabilistic version of these ideas  can be found in \cite{hof1, hof2}.}.  $\Gamma_G$ has interesting properties: it is always connected, of diameter 2 and hamiltonian (see \cite[Propositions 2.1, 2,2]{aam}); moreover  the planar and the regular cases are classified by \cite[Propositions 2.3, 2.6]{aam}. To the best of our knowledge, there are no isoperimetric inequalities on its invariants and  we are going to show one of these here for the first time.

 Following \cite[\S 5.2]{chung4}, 
 it is possible to define   \textit{the gradient operator} 
\[ \nabla :  f \in \mathbb{R}^{V \times V} \longmapsto \nabla f=\nabla_{xy}f=f(y)-f(x) \in \mathbb{R},\]
where $\mathbb{R}^{V \times V}$ is the set of all functions from $V \times V$ to $\mathbb{R}$,  and $\nabla_{xy}$ denotes the fact that there is a dependence from $x,y \in V$ in the definition of $\nabla$. Consequently, 
\[\Delta :  f \in \mathbb{R}^V \longmapsto \Delta f(x)= \frac{1}{\mu_x} \sum_{y :  y \sim x} (\nabla_{xy}f) \sigma_{xy} \in \mathbb{R}\]
is  \textit{the Laplace operator }.  A natural variation of the Green's Formula is 
 \[{\underset{x \in \Omega}{\underset{|\Omega|< \infty}\sum}} \Delta f(x) \mu_x
 ={\underset{x \in \Omega, y \in V-\Omega}{\underset{|\Omega|< \infty}\sum}} (\nabla_{xy} f) \sigma_{xy}={\underset{e \in \partial \Omega}{\underset{|\Omega|< \infty}\sum}} (\nabla_e f) \sigma_e\]
and, if $f, g \in \mathbb{R}^V$ with either $f$ or $g$ of finite support, then  
 \[\sum_{x \in V} \Delta f(x) g(x) \mu_x=-\frac{1}{2} \sum_{x,y \in V} (\nabla_{xy} f)  (\nabla_{xy} g) \sigma_{xy}=-\sum_{e \in E} (\nabla_e f) (\nabla_e g)\sigma_e.\]
We will consider  distance functions on $V$, inspired by analogous contexts of riemannian geometry in \cite{nardulli2, nardulli1}. The \textit{graph distance} $\rho_\xi(x)$ between $x \in V$ and the fixed vertex $\xi \in V$ is the number of edges in a shortest path (also called a \textit{graph geodesic}) connecting them and, in particular,  \[\rho : (\xi, x) \in V \times V \longmapsto \rho(\xi,x)=\rho_\xi(x) \in \mathbb{N} \ \ \mathrm{and} \ \ \rho_\xi : x \in V \longmapsto \rho_\xi(x) \in \mathbb{N}.\]
This is also known as the geodesic distance and we note that there are more than one shortest path between two vertices\footnote{ If there is no path connecting the two vertices, i.e., if they belong to different connected components, then conventionally the distance is defined as infinite. We also note that in  case of a directed graph the distance $\rho_\xi(x)$ is defined as the length of a shortest path from $\xi$ to $x$ consisting of arcs, provided at least one such path exists.}. In contrast with the case of undirected graphs, one may have  $\rho$ is not symmetric a priori. But we only deal with graphs possessing a distance function as $\rho$ and all we have said up to now is of course true for finite graphs (in particular, for $\Gamma_G$). This allows us to define, fixed $r>0$, a ball
$B_\xi(r)=\{x \in V \ | \ \rho_\xi(x) <r\}.$ We show mainly two results in the present paper. One is a specialization to $\Gamma_G$ of theorems in \cite{chung4}. This provides an isoperimetric inequality for $\Gamma_G$, which is unknown up to now. The second main result has more general interest and shows a characterization  in terms of a Nash--type inequality of certain locally finite weighted graphs, which generalise the noncommuting graph.

\section{First result}

Following \cite{chung1, chung4}, we may restrict the investigations to  graphs, whose  geometric properties are analogous with some classical notions of the riemannian manifolds (see \cite{bakry1,  hebey}). For a wieghted graph $\Gamma$ with a distance $\rho$,  the positive quantity \[\mu^\xi_x={\underset{\rho_\xi(y)<\rho_\xi(x)}{\underset{y : y \sim x \ } \sum}} \sigma_{xy},\] 
clearly satisfies $\mu^\xi_x<\mu_x$ and allows us to introduce the ratio
\[\nu_r=\inf \left\{\frac{\mu_x}{\mu^\xi_x} \ \Big| \ \xi \in V, x \in B_\xi(r)\right\},\]
which correspond to the notion of \textit{relative isoperimetric dimension} in \cite{aubin, bakry1,  hebey}.
In  this spirit, Chung and others introduced the  so called  $P(\delta,\iota,R_0)$ \textit{property} .

\begin{defn}[See \cite{chung4}]\label{p}We say that $(\Gamma,\sigma)$ has $P(\delta, \iota, R_0)$, when :
\begin{itemize}
\item[(i)] $|\nabla_{xy} \rho_\xi| \le 1$ for any $\xi, x,y \in V$;
\item[(ii)] $\exists$ a function $q_\xi (x)$ and three  constants $\iota \ge 1$, $\delta>0$ and $R_0>0$ such that
\begin{itemize}
\item[(1)]$q_\xi(x) \ge 0$ for all $x \in V$, and $q_\xi(x)=0$ if and only if $x=\xi$;
\item[(2)]$|\nabla_{xy} q_\xi| \le \rho_\xi(x) + \iota$ for all $\xi \in V$ and $x,y \in B_\xi(R_0)$;
\item[(3)]$\Delta q_\xi(x) \ge \delta$  for all $\xi \in V$ and $x \in B_\xi(R_0)$;
\end{itemize}
\item[(iii)]$n= \delta \nu_{R_0 +1} \ge 1$.
\end{itemize}
\end{defn}

The presence of an isoperimetric inequality  can be deduced from $P(\delta, \iota, R_0)$. 

\begin{thm}[See \cite{chung4}, Theorem 6.3]\label{iso1} If a weighted graph $(\Gamma, \sigma)$ has  $P(\delta, \iota, R_0)$, then the following isoperimetric inequality is true
\[\sigma(\partial \Omega) \ge c \ \mu(\Omega)^{1-\frac{1}{n}},\]
where $\Omega \subseteq V$ is finite, $\omega= \inf \{\mu_x \ | \ x \in V\}$, $\omega'=\inf \{\sigma_{xy} \ | \ x\sim y, x,y \in V \}$ and \[ c=\frac{\omega' \omega^{\frac{1}{n-1}}}{4^{n+3} \nu_{R_0 +1} \iota e^{2n}}.\]
\end{thm}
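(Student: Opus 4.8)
The plan is to recall the structure of the argument of \cite[Theorem~6.3]{chung4} in our notation; it is the discrete analogue of the Riemannian principle that polynomial volume growth together with a ``distance-like'' function of controlled Laplacian forces an isoperimetric inequality (cf.\ \cite{bakry1, hebey}), with the Green-type formula of Section~1 playing the role of integration by parts. Throughout one fixes a finite $\Omega\subseteq V$, a vertex $\xi$, and writes $V(r)=\mu(B_\xi(r))$ and $S_\xi(r)=\{x\in V:\rho_\xi(x)=r\}$, so that $B_\xi(r+1)\setminus B_\xi(r)=S_\xi(r)$.

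\textbf{Step 1: polynomial volume growth.} First one applies the Green formula to $q_\xi$ on $B_\xi(r)$ for $r\le R_0$. On the left, condition (ii)(3) of Definition~\ref{p} gives $\sum_{x\in B_\xi(r)}\Delta q_\xi(x)\,\mu_x\ge\delta\,V(r)$. On the right, every edge of $\partial B_\xi(r)$ has the form $xy$ with $\rho_\xi(x)<r$ and $\rho_\xi(y)=r$, so (ii)(2) together with (i) give $\nabla_{xy}q_\xi\le\rho_\xi(x)+\iota\le r+\iota$, whence $\sum_{e\in\partial B_\xi(r)}(\nabla_e q_\xi)\,\sigma_e\le(r+\iota)\,\sigma(\partial B_\xi(r))$. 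Since $\partial B_\xi(r)$ is exactly the set of edges from $S_\xi(r)$ into $B_\xi(r)$, one has $\sigma(\partial B_\xi(r))=\sum_{y\in S_\xi(r)}\mu^\xi_y$, and the definition of $\nu_{R_0+1}$ gives $\mu^\xi_y\le\mu_y/\nu_{R_0+1}$ for $y\in S_\xi(r)\subseteq B_\xi(R_0+1)$; hence $\sigma(\partial B_\xi(r))\le\nu_{R_0+1}^{-1}(V(r+1)-V(r))$. Combining and using $n=\delta\,\nu_{R_0+1}$ from (iii),
\[V(r+1)\ \ge\ \Bigl(1+\frac{n}{\,r+\iota\,}\Bigr)V(r)\qquad(1\le r\le R_0).\]
Iterating from $V(1)=\mu_\xi\ge\omega$ and estimating the product $\prod_{j=1}^{r-1}\bigl(1+n/(j+\iota)\bigr)$ by Stirling's formula (this is where the shift by $\iota$ and the exponential factor $e^{2n}$ originate) produces $V(r)\ge\kappa\,\omega\,r^{\,n}$ with an explicit $\kappa$.

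\textbf{Step 2: reduction to a ball and optimisation.} Next one chooses $\xi\in\Omega$ and a radius $R$ (capped at $R_0$; the complementary range, and the case of a graph of bounded diameter such as $\Gamma_G$, being immediate) so that $\Omega\cap B_\xi(R)$ carries a fixed proportion of $\mu(\Omega)$ while $V(R)$ stays comparable to $\mu(\Omega)$; by Step~1, $R$ is then of order $(\mu(\Omega)/(\kappa\,\omega))^{1/n}$. Applying the Green formula to $q_\xi$ on $\Omega'=\Omega\cap B_\xi(R)$: the left side is $\ge\delta\,\mu(\Omega')$, while each edge of $\partial\Omega'$ lies in $\partial\Omega$ or crosses the sphere $S_\xi(R)$, and on all of them $|\nabla_e q_\xi|\le R+\iota$, so
\[\delta\,\mu(\Omega')\ \le\ (R+\iota)\,\bigl(\sigma(\partial\Omega)+\sigma(\partial B_\xi(R))\bigr).\]
The spherical term is again $\le\nu_{R_0+1}^{-1}(V(R+1)-V(R))$, and by selecting $R$ inside a range of radii of length comparable to $R$ (so that the increments telescope against the volume growth of Step~1) one makes it no larger than a fixed fraction of the other quantities, at the cost of the combinatorial factor $4^{n+3}$. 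What remains is $\sigma(\partial\Omega)\gtrsim\delta\,\mu(\Omega)/(R+\iota)$, which by the bound on $R$ is at least a constant (depending only on $\delta,\iota,\omega,\omega',\nu_{R_0+1}$) times $\mu(\Omega)^{1-\frac1n}$; using $\sigma(\partial\Omega)\ge\omega'|\partial\Omega|$ where edge measures are replaced by edge counts, and collecting all the constants generated in Steps~1--2, yields exactly the value of $c$ in the statement.

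\textbf{Main obstacle.} The genuinely delicate point is the last paragraph: the spherical boundary $\sigma(\partial B_\xi(R))$ does not come for free and must be defeated by playing $R$ against the volume growth already secured in Step~1; balancing these two effects, and keeping honest track of the constants through the iterated product and through this absorption, is the quantitative core of \cite[\S 6]{chung4} and is precisely what manufactures the dimensional factors $4^{n+3}$ and $e^{2n}$. By comparison, checking that the hypotheses (i)--(iii) of $P(\delta,\iota,R_0)$ feed the argument is routine once the Green-type formula of Section~1 is in hand.
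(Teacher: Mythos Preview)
The paper does not supply its own proof of this theorem: it is quoted verbatim as \cite[Theorem~6.3]{chung4} and used as a black box (the only proofs in Section~2 are those of Lemma~\ref{l:1}, Corollary~\ref{c:1} and Theorem~\ref{spec1}, each of which merely \emph{applies} Theorem~\ref{iso1}). Consequently there is nothing in the paper to compare your proposal against.

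That said, your sketch does track the architecture of the argument in \cite[\S6]{chung4}: Green's formula applied to $q_\xi$ on balls to extract the recursion $V(r+1)\ge(1+n/(r+\iota))V(r)$, iteration to polynomial volume growth, then a second application on $\Omega\cap B_\xi(R)$ with a choice of $R$ that absorbs the spherical boundary term. As a high-level outline this is faithful to the source; the honest bookkeeping of the constants $4^{n+3}$, $e^{2n}$, $\omega$, $\omega'$ is indeed the laborious part, and you have correctly flagged rather than carried it out. If the intent was to reproduce the paper's treatment, the accurate statement is simply that the result is cited without proof.
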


An inequality of Sobolev type (see \cite{bakry1}) is recalled below in our context. Note that the presence of an isoperimetric inequality is requested in the assumptions.

\begin{thm}[See \cite{chung4}, Theorem 7.6]\label{sob1} If a weighted graph $(\Gamma, \sigma)$ possess a finite subset $\Omega \subseteq V$ of $\mu(\Omega)<v_0 $ such that 
$\sigma(\partial \Omega) \ge c \mu(\Omega)^{1-\frac{1}{n}}$ for some $c,v_0>0$ and $n>1$, then
\[C(n,p)  \left( {\underset{x,y \in V}{\underset{y : y \sim x} \sum}} | f(y)-f(x)|^p \sigma_{xy}\right)^{\frac{1}{p}} + c K(\Omega_0) \left(\sum_{x \in V}|f|^p \mu_x \right)^{\frac{1}{p}}\]
\[ \ge \frac{c}{2^{1+\frac{1}{n}-\frac{1}{p}}} \left(\sum_{x \in V} |f|^{\frac{np}{n-p}}\right)^{\frac{n-p}{np}}\leqno{(\flat)}\]
for any $f \in \mathbb{R}^V$ of finite support, where (with the meaning of Theorem \ref{iso1}) \[  c=\frac{\omega' \omega^{\frac{1}{n-1}}}{4^{n+3} \nu_{R_0 +1} \iota e^{2n}},\]   $C(n,p)>0$ is a  positive constant, \[\Omega_0=\{x \in V \ | \ |f(x)|>0\}\] and $$K(\Omega_0)= \begin{cases}  0  & \ \ \ if \ \ \mu(\Omega_0) \le v_0 \\
v^{-1}_0\mu(\Omega_0)^{1-\frac{1}{n}} & \  \ \ if \ \ \mu(\Omega_0) > v_0.
\end{cases}$$
\end{thm}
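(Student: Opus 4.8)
The plan is to run the classical co-area argument of Federer--Fleming and Maz'ya in the graph setting: first derive the case $p=1$ of $(\flat)$ from the isoperimetric hypothesis by slicing $f$ along its level sets, and then reach $1<p<n$ by applying that case to a power of $f$ together with H\"older's inequality.

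First I would normalise. Since $\bigl||f(y)|-|f(x)|\bigr|\le|f(y)-f(x)|$, replacing $f$ by $|f|$ does not increase the left-hand side of $(\flat)$ and leaves the right-hand side unchanged, so I may assume $f\ge 0$ and $f\not\equiv 0$. For $t\ge 0$ put $\Omega_t=\{x\in V:\ f(x)>t\}$; as $f$ has finite support each $\Omega_t$ is a finite subset of the support $\Omega_0$, and $t\mapsto\mu(\Omega_t)$ is a non-increasing step function. Combining $f(x)=\int_0^\infty\mathbf{1}_{\Omega_t}(x)\,dt$ with Minkowski's integral inequality, applied with exponent $q=\tfrac{n}{n-1}$ (so $\tfrac1q=1-\tfrac1n$), yields
\[\Bigl(\sum_{x\in V}f(x)^{q}\mu_x\Bigr)^{1/q}\ \le\ \int_0^\infty\mu(\Omega_t)^{1-\frac1n}\,dt .\]
I then split this integral at the level $v_0$. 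Where $\mu(\Omega_t)\le v_0$, the hypothesis --- which by Theorem~\ref{iso1} holds for every finite set of $\mu$-measure below $v_0$ --- gives $\mu(\Omega_t)^{1-\frac1n}\le c^{-1}\sigma(\partial\Omega_t)$, and the graph co-area identity $\int_0^\infty\sigma(\partial\Omega_t)\,dt=\sum_{xy\in E}|\nabla_{xy}f|\,\sigma_{xy}=\tfrac12\sum_{x,y:\,x\sim y}|f(y)-f(x)|\,\sigma_{xy}$ controls this part by $\tfrac1{2c}\sum_{x,y:\,x\sim y}|f(y)-f(x)|\,\sigma_{xy}$. The set where $\mu(\Omega_t)>v_0$ is an interval $[0,b)$ with $b\le v_0^{-1}\sum_x f(x)\mu_x$ by Chebyshev's inequality, and there $\mu(\Omega_t)^{1-\frac1n}\le\mu(\Omega_0)^{1-\frac1n}$, so this part is at most $v_0^{-1}\mu(\Omega_0)^{1-\frac1n}\sum_x f(x)\mu_x=K(\Omega_0)\sum_x f(x)\mu_x$, which vanishes when $\mu(\Omega_0)\le v_0$. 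Adding the two contributions and multiplying by $c$ gives the case $p=1$.

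For $1<p<n$ I would apply the case $p=1$, in the form $\|h\|_{\ell^{q}(\mu)}\le\tfrac1{2c}\sum_{x,y:\,x\sim y}|h(y)-h(x)|\,\sigma_{xy}+K(\Omega_0)\|h\|_{\ell^{1}(\mu)}$, to $h=g:=f^{\gamma}$ with $\gamma=\tfrac{(n-1)p}{n-p}>1$ (same support $\Omega_0$). The mean-value estimate $|g(y)-g(x)|\le\gamma\bigl(f(x)^{\gamma-1}+f(y)^{\gamma-1}\bigr)|f(y)-f(x)|$, followed by H\"older's inequality with exponents $p$ and $p'=\tfrac{p}{p-1}$ on the edge sum, bounds $\sum_{x,y:\,x\sim y}|g(y)-g(x)|\,\sigma_{xy}$ by a fixed multiple of $\gamma\bigl(\sum_{x,y:\,x\sim y}|f(y)-f(x)|^{p}\sigma_{xy}\bigr)^{1/p}\bigl(\sum_x f^{r}\mu_x\bigr)^{1/p'}$, where $r:=\tfrac{np}{n-p}$ and one uses $\gamma q=r$ and $(\gamma-1)p'=r$. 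For the zero-order term, the interpolation inequality $\|f\|_{\ell^{\gamma}(\mu)}^{\gamma}\le\|f\|_{\ell^{p}(\mu)}\|f\|_{\ell^{r}(\mu)}^{\gamma-1}$ (valid since $p\le\gamma\le r$, $\tfrac1p+\tfrac{\gamma-1}{r}=1$ and $\tfrac{\gamma-1}{r}=\tfrac1{p'}$) gives $\|g\|_{\ell^{1}(\mu)}=\sum_x f^{\gamma}\mu_x\le\bigl(\sum_x f^{p}\mu_x\bigr)^{1/p}\bigl(\sum_x f^{r}\mu_x\bigr)^{1/p'}$. Since also $\|g\|_{\ell^{q}(\mu)}=\bigl(\sum_x f^{r}\mu_x\bigr)^{1/q}=\bigl(\sum_x f^{r}\mu_x\bigr)^{1/p'}\bigl(\sum_x f^{r}\mu_x\bigr)^{1/r}$ (because $\tfrac1q=\tfrac1{p'}+\tfrac1r$), dividing through by the common nonzero factor $\bigl(\sum_x f^{r}\mu_x\bigr)^{1/p'}$ and multiplying by $c$ leaves
\[c\Bigl(\sum_x f^{r}\mu_x\Bigr)^{1/r}\ \le\ \gamma\Bigl(\sum_{x,y:\,x\sim y}|f(y)-f(x)|^{p}\sigma_{xy}\Bigr)^{1/p}+c\,K(\Omega_0)\Bigl(\sum_x f^{p}\mu_x\Bigr)^{1/p}.\]
Finally $\mu_x\ge\omega$ turns $\bigl(\sum_x f^{r}\mu_x\bigr)^{1/r}$ into a constant multiple of $\bigl(\sum_x |f|^{np/(n-p)}\bigr)^{(n-p)/(np)}$, and absorbing that constant and the remaining numerical factors into $c$ and into $C(n,p)$ --- one checks that $C(n,p)=\tfrac{(n-1)p}{n-p}$ will do --- produces $(\flat)$.

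The argument is conceptually routine, so the work is in the bookkeeping. The two delicate points are the extraction, from the ``large level sets'' $\{t:\mu(\Omega_t)>v_0\}$, of exactly the piecewise-defined correction $K(\Omega_0)$, and the verification in the bootstrap that the chain of elementary inequalities composes so that the constant left in front of the top-order norm is precisely a function of $n$ and $p$ times the explicit factor $2^{-(1+\frac1n-\frac1p)}$ of $(\flat)$, the whole dependence on $\omega,\omega',\nu_{R_0+1}$ and $\iota$ being already carried by $c$ as displayed in Theorem~\ref{iso1}. I expect this constant tracking, rather than any conceptual step, to be the main effort.
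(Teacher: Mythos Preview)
The paper does not supply its own proof of this statement: Theorem~\ref{sob1} is quoted verbatim from \cite{chung4} (their Theorem~7.6) and used as a black box, so there is nothing in the present paper to compare your proposal against.

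That said, your outline is the standard and correct route, and it is essentially the argument one finds in \cite{chung4}: the Federer--Fleming/Maz'ya co-area method adapted to weighted graphs. You slice $|f|$ by its super-level sets, apply the isoperimetric hypothesis on levels with $\mu(\Omega_t)\le v_0$, bound the remaining levels crudely via Chebyshev to manufacture exactly the correction term $K(\Omega_0)$, sum using the graph co-area identity to obtain the case $p=1$, and then bootstrap to $1<p<n$ by substituting $f^{\gamma}$ with $\gamma=\tfrac{(n-1)p}{n-p}$. The exponent bookkeeping you record (namely $\gamma q=r$, $(\gamma-1)p'=r$, $\tfrac1q=\tfrac1{p'}+\tfrac1r$) is correct, and your own caveat that the constant tracking is the only real labour is accurate. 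One small point worth making explicit when you write it out in full: in the H\"older step on the edge sum you silently use $\sum_{y:\,y\sim x}\sigma_{xy}=\mu_x$ to pass from $\bigl(\sum_{x,y:\,x\sim y}f(x)^{(\gamma-1)p'}\sigma_{xy}\bigr)^{1/p'}$ to $\bigl(\sum_x f^{r}\mu_x\bigr)^{1/p'}$; say so, and keep track of the factor of $2$ coming from ordered versus unordered edge sums, since these feed directly into the numerical factor $2^{-(1+\frac1n-\frac1p)}$ displayed in $(\flat)$.
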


What we said until now can be tested for the noncommuting graph.

\begin{lem}\label{l:1}
$\Gamma_G$ satisfies (i)--(iii) of Definition \ref{p}.
\end{lem}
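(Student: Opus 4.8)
The plan is to exhibit explicit witnesses $q_\xi$, $\iota$, $\delta$, $R_0$ for the property $P(\delta,\iota,R_0)$, using only that (for $G$ nonabelian, since otherwise $\Gamma_G$ has no vertices and there is nothing to prove) $\Gamma_G$ is finite, connected, of diameter $2$ by \cite[Propositions 2.1, 2.2]{aam}, and unweighted, so that $\sigma_{xy}=1$ and $\mu_x=\deg(x)$. The crucial decision is the size of $R_0$. One is tempted, since the diameter is $2$, to take $R_0\ge 3$ so that $B_\xi(R_0)=V$; but this is doomed, because for any radial nondecreasing function $q_\xi=\varphi(\rho_\xi)$ one has $\Delta q_\xi(x)=\frac{1}{\deg(x)}\sum_{y\sim x}\bigl(\varphi(\rho_\xi(y))-\varphi(\rho_\xi(x))\bigr)\le 0$ at every vertex $x$ realising the maximal distance from $\xi$, and in fact $\Delta q_\xi$ already becomes negative at some neighbours of $\xi$ (this happens already in $\Gamma_{S_3}$), so (ii)(3) fails. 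Hence I take $R_0=1$, which collapses $B_\xi(R_0)$ to $\{\xi\}$ and reduces conditions (ii)(2) and (ii)(3) to requirements at the single vertex $\xi$.

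With this choice the verification is routine. For (i): appending the edge $xy$ to a shortest path from $\xi$ to $y$ shows $\rho_\xi(x)\le\rho_\xi(y)+1$, and symmetrically, so $|\nabla_{xy}\rho_\xi|=|\rho_\xi(y)-\rho_\xi(x)|\le 1$ for every edge $xy$. For (ii) I set
\[
q_\xi(x)=\tfrac12\,\rho_\xi(x)^2,\qquad \iota=1,\qquad R_0=1 .
\]
Then (1) holds because $q_\xi\ge 0$ and $q_\xi(x)=0$ exactly when $\rho_\xi(x)=0$, i.e. $x=\xi$. For (2), on $B_\xi(1)=\{\xi\}$ only $x=y=\xi$ occurs and $\nabla_{\xi\xi}q_\xi=0\le\rho_\xi(\xi)+\iota$; more generally, for any edge $xy$ one has $|\nabla_{xy}q_\xi|=\tfrac12|\rho_\xi(y)-\rho_\xi(x)|\,(\rho_\xi(y)+\rho_\xi(x))\le\tfrac12(2\rho_\xi(x)+1)\le\rho_\xi(x)+1$, using (i). For (3), every neighbour of $\xi$ is at distance $1$ from $\xi$, so
\[
\Delta q_\xi(\xi)=\frac{1}{\mu_\xi}\sum_{y\sim\xi}\bigl(q_\xi(y)-q_\xi(\xi)\bigr)\sigma_{\xi y}=\frac{1}{\deg(\xi)}\sum_{y\sim\xi}\tfrac12=\tfrac12 ,
\]
so (3) holds with $\delta=\tfrac12$.

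Finally (iii) asks $n=\delta\,\nu_{R_0+1}=\tfrac12\,\nu_2\ge 1$, so I must compute $\nu_2$. Since $B_\xi(2)=\{\xi\}\cup N(\xi)$, for $x\in N(\xi)$ (so $\rho_\xi(x)=1$) the only neighbour of $x$ strictly closer to $\xi$ than $x$ is $\xi$ itself, hence $\mu_x^\xi=\sigma_{x\xi}=1$ and $\mu_x/\mu_x^\xi=\deg(x)$; therefore $\nu_2=\min\{\deg(x)\mid x\in V\}$. But each vertex $x$ is non-central, so $\deg_{\Gamma_G}(x)=|G|-|C_G(x)|$, and the conjugacy class of $x$ having at least two elements forces $|C_G(x)|\le|G|/2$, i.e. $\deg(x)\ge|G|/2\ge 3$ (as $|G|\ge 6$). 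Thus $n=\tfrac12\,\nu_2\ge\tfrac32\ge 1$, which is (iii). The whole difficulty is concentrated in the choice $R_0=1$; the rest consists of the elementary identities above, and the same argument run with $q_\xi=\rho_\xi$ and $\delta=1$ would give $n=\nu_2=\min_{x}\deg(x)$ instead.
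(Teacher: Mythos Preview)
Your proof is correct, and in fact more careful than the paper's own argument. Both you and the paper take $q_\xi(x)=\tfrac12\rho_\xi(x)^2$ and $\iota=1$, and the verification of (i) and (ii)(1),(2) is essentially the same. The substantive difference is the choice of $R_0$.

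The paper's proof works with $R_0=2$, so that $B_\xi(R_0)$ contains the neighbours of $\xi$, and asserts for all such $x$ that
\[
2\deg(x)\,\Delta q_\xi(x)=\sum_{y\sim x}\bigl(\rho_\xi(y)-\rho_\xi(x)\bigr)\bigl(\rho_\xi(y)+\rho_\xi(x)\bigr)\ge 1,
\]
concluding (ii)(3) with $\delta=1$. This step is neither justified nor true in general: your $\Gamma_{S_3}$ counterexample is exactly on point, since for $\xi=(12)$ and $x=(13)$ every neighbour of $x$ is at distance at most $1$ from $\xi$, so the displayed sum equals $-1$ and $\Delta q_\xi(x)<0$. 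Your decision to take $R_0=1$ sidesteps this entirely and gives a clean $\Delta q_\xi(\xi)=\tfrac12$, hence $\delta=\tfrac12$.

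The price is that you obtain $n=\tfrac12\nu_2$ rather than the paper's $n=\nu_2$ (stated in Corollary~\ref{c:1}); your closing remark that $q_\xi=\rho_\xi$ with $R_0=1$ yields $\delta=1$ and $n=\nu_2$ recovers exactly the constants the paper records. You also go beyond the paper by actually verifying (iii): you compute $\nu_2=\min_{x\in V}\deg(x)$ and bound it below via $\deg(x)=|G|-|C_G(x)|\ge |G|/2\ge 3$, whereas the paper simply writes $n=\nu_2$ without checking $\nu_2\ge 1$.
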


\begin{proof} We begin to check (i). Recall that $\mathrm{diam} \ \Gamma_G=2$ and $|\nabla_{xy} \rho_\xi|=|\rho_\xi(y) - \rho_\xi(x)|$. Of course
 $|\nabla_{xy} \rho_\xi|=0$, whenever $x=y=\xi$. Assume $x \neq y$ and $x=\xi$. Since $\mathrm{diam} \ \Gamma_G=2$, we  have 3 points in $V$ and two of them coincide, hence $\rho_\xi(y)=1$ and $\rho_\xi(x)=0$. The same argument applies when $x \neq y$ and $y=\xi$. Then in both cases $|\nabla_{xy} \rho_\xi|=1$. Assume now $x \neq y$, $x \neq \xi$ and $y \neq \xi$. Again the condition $\mathrm{diam} \ \Gamma_G=2$ implies $\rho_\xi(x)= \rho_\xi(y)=1$ and so  $|\nabla_{xy} \rho_\xi|=0$. This allows us to conclude that $|\nabla_{xy} \rho_\xi|=1$ for all $x,y,\xi \in V$. About (ii) of Definition \ref{p} it is enough to put $q_\xi(x)=\frac{1}{2}\rho^2_\xi(x).$ In fact one can check easily (ii.1). About (ii.2) \[\nabla_{xy} q_\xi = q_\xi(y)-q_\xi(x)=\frac{1}{2} (\rho^2_\xi(y)-\rho^2_\xi(x))=\frac{1}{2}\underbrace{(\rho_\xi(y)-\rho_\xi(x))}_{\le 1 \ \mathrm{from} \  \mathrm{(i)} \ \mathrm{above} } \ (\rho_\xi(y)+\rho_\xi(x))\]\[ \le \frac{1}{2} (\rho_\xi(y) + \rho_\xi(x))   \le 1 + \rho_\xi(x) .\]
Finally, for any $\xi \in V$ and $x \in B_\xi(2)=B_\xi(R_0)$ we have 
\[ 2 \ \mathrm{deg}(x) \ \Delta q_\xi(x)= \sum_{y : y \sim x}  (\rho_\xi(y)-\rho_\xi(x)) \ (\rho_\xi(y)+\rho_\xi(x))  \ge 1\] and so (ii.3) is realized with $\delta=1$.  (iii) is satisfied with $n=\nu_3$, but $\mathrm{diam} \ \Gamma_G=2$ implies $\nu_3=\nu_2$ and so $n=\nu_2$ is better.
\end{proof}

The previous lemma provides information, which we summarize below.

\begin{cor}\label{c:1}  $\Gamma_G$ has $R_0=\delta=\iota=1$ and $n=\nu_2$ in Definition \ref{p}.
\end{cor}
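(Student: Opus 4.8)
The plan is to read off the parameters directly from the verification carried out in Lemma \ref{l:1}, essentially by collecting the constants produced there rather than proving anything new. Since Lemma \ref{l:1} has just established that $\Gamma_G$ satisfies conditions (i)--(iii) of Definition \ref{p}, it suffices to record \emph{which} values of $\delta$, $\iota$, $R_0$ and the derived quantity $n=\delta\nu_{R_0+1}$ make those conditions hold, and this is purely a matter of inspecting the three parts of the proof.

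\begin{proof}
This is immediate from the proof of Lemma \ref{l:1}. Indeed, condition (i) of Definition \ref{p} holds unconditionally for $\Gamma_G$, as shown there. For condition (ii) we exhibited the explicit function $q_\xi(x)=\tfrac12\rho_\xi^2(x)$: part (ii.1) is clear, part (ii.2) was verified with $\iota=1$, and part (ii.3) was verified with $\delta=1$, the relevant estimates holding on $B_\xi(2)$; hence we may take $R_0=1$ (so that $B_\xi(R_0+1)=B_\xi(2)$, which is all of $V$ since $\operatorname{diam}\Gamma_G=2$). Finally, condition (iii) requires $n=\delta\,\nu_{R_0+1}\ge 1$; with $\delta=1$ and $R_0=1$ this gives $n=\nu_2$, and since $\operatorname{diam}\Gamma_G=2$ forces $\nu_{R_0+1}=\nu_2$ regardless of the (allowed) choice of $R_0\ge 1$, the value $n=\nu_2$ is both valid and optimal among these. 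Collecting these, $\Gamma_G$ has $R_0=\delta=\iota=1$ and $n=\nu_2$ in Definition \ref{p}.
\end{proof}

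There is essentially no obstacle here: the corollary is a bookkeeping restatement of the constants that appeared in Lemma \ref{l:1}, and the only point worth a word of care is the passage from $B_\xi(R_0)$ in the definition to $B_\xi(2)$ in the proof, together with the observation that $\nu_{R_0+1}$ is insensitive to the choice of $R_0\ge 1$ because the graph has diameter $2$. Once that is noted, the statement follows directly, and the point of the corollary is simply to have the numerical values $R_0=\delta=\iota=1$, $n=\nu_2$ on record so that they can be substituted into the constant $c$ and the inequality $(\flat)$ of Theorems \ref{iso1} and \ref{sob1} when these are specialized to $\Gamma_G$.
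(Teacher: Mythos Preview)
Your approach is exactly the paper's: the proof there reads, in its entirety, ``See proof of Lemma \ref{l:1},'' and you have simply spelled out which constants were produced in that lemma. One small slip: your parenthetical claim that $B_\xi(2)$ is all of $V$ is false, since the ball is defined with a \emph{strict} inequality $\rho_\xi(x)<r$, so vertices at distance exactly $2$ from $\xi$ are excluded; this does not affect your argument, however, since condition (ii) only requires the estimates on $B_\xi(R_0)=B_\xi(1)\subseteq B_\xi(2)$, and condition (iii) then gives $n=\nu_2$ directly. (The paper's own Lemma actually writes $B_\xi(2)=B_\xi(R_0)$, i.e.\ $R_0=2$, and reaches $n=\nu_2$ via $\nu_3=\nu_2$; the Corollary's value $R_0=1$ is a harmless tightening, as you note.)
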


\begin{proof}See proof of Lemma \ref{l:1}.
\end{proof}

Now our first main result can be stated.

\begin{thm}\label{spec1}  $\Gamma_G$ satisfies the isoperimetric inequality
\[|\partial \Omega| \ge c \left( \sum_{x \in \Omega} \mathrm{deg}(x)\right)^{1-\frac{1}{\nu_2}}, \]
where  $\Omega \subseteq G-Z(G)$, $\omega=\inf \{\mathrm{deg} \ (x) \ | \ x \in G-Z(G)\}$ and
\[ \ c=\frac{\omega^{1-\frac{1}{\nu_2}}}{4^{\nu_2 +3} \ \nu_2 \ e^{2 \nu_2}}.\]
\end{thm}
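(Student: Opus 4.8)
The plan is to combine the structural facts already established about $\Gamma_G$ with the general isoperimetric inequality of Theorem~\ref{iso1}. By Lemma~\ref{l:1} and Corollary~\ref{c:1}, the noncommuting graph $\Gamma_G$ possesses the property $P(\delta,\iota,R_0)$ with the explicit choices $R_0=\delta=\iota=1$ and $n=\delta\,\nu_{R_0+1}=\nu_2$ (using $\mathrm{diam}\,\Gamma_G=2$, which forces $\nu_3=\nu_2$). Hence all hypotheses of Theorem~\ref{iso1} are met, and we may simply substitute these values into its conclusion.

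First I would recall that $\Gamma_G$ carries no weights, so $\sigma_{xy}=1$ for every edge $xy\in E$; consequently $\sigma(\partial\Omega)=|\partial\Omega|$, $\mu_x=\mathrm{deg}(x)$, and $\mu(\Omega)=\sum_{x\in\Omega}\mathrm{deg}(x)$, as noted in Section~1. Moreover $\omega'=\inf\{\sigma_{xy}\mid x\sim y\}=1$, while $\omega=\inf\{\mu_x\mid x\in V\}=\inf\{\mathrm{deg}(x)\mid x\in G-Z(G)\}$ is exactly the quantity named in the statement. Plugging $n=\nu_2$, $\iota=1$, $\omega'=1$, and $\nu_{R_0+1}=\nu_{2}$ into the constant $c=\dfrac{\omega'\,\omega^{1/(n-1)}}{4^{n+3}\,\nu_{R_0+1}\,\iota\,e^{2n}}$ from Theorem~\ref{iso1} yields
\[
c=\frac{\omega^{\frac{1}{\nu_2-1}}}{4^{\nu_2+3}\,\nu_2\,e^{2\nu_2}},
\]
and Theorem~\ref{iso1} then gives $\sigma(\partial\Omega)\ge c\,\mu(\Omega)^{1-\frac1n}$, i.e. $|\partial\Omega|\ge c\,\bigl(\sum_{x\in\Omega}\mathrm{deg}(x)\bigr)^{1-\frac{1}{\nu_2}}$, for every finite $\Omega\subseteq V=G-Z(G)$.

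The only genuine discrepancy to address is the exponent on $\omega$ in the displayed constant of the theorem statement, which reads $\omega^{1-\frac{1}{\nu_2}}$ rather than $\omega^{\frac{1}{\nu_2-1}}$. I would reconcile this either by observing it is the intended normalization obtained from Theorem~\ref{iso1} after absorbing a factor (so that the stated $c$ is no larger than the one from Theorem~\ref{iso1}, keeping the inequality valid), or by flagging that the sharp constant is $\omega^{1/(\nu_2-1)}$ and the displayed form is what the specialization produces; in any case the inequality itself is immediate once $P(\delta,\iota,R_0)$ is verified. I expect this bookkeeping with the constant — making sure every substituted value is consistent and that no hypothesis of Theorem~\ref{iso1} (in particular finiteness of $\Omega$ and positivity of $\omega,\omega'$, which hold since $G-Z(G)$ is finite and nonempty whenever $G$ is nonabelian) is overlooked — to be the main, and essentially the only, point requiring care; there is no new analytic content beyond what Lemma~\ref{l:1} already supplies.
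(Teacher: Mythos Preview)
Your approach is exactly the paper's: the proof there reads, in its entirety, ``We specialize Theorem~\ref{iso1}, by the use of Lemma~\ref{l:1} and Corollary~\ref{c:1},'' which is precisely the substitution you carry out. Your observation about the exponent on $\omega$ is well taken---the direct specialization of Theorem~\ref{iso1} indeed yields $\omega^{1/(\nu_2-1)}$ rather than the $\omega^{1-1/\nu_2}$ printed in the statement, so this appears to be a typo in the paper rather than a gap in your argument.
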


\begin{proof} We specialize Thereom \ref{iso1}, by the use of Lemma \ref{l:1} and Corollary \ref{c:1}. 
\end{proof}

There are  difficulties of computation for $\nu_2$ already for groups of order 8.

\begin{ex}Let $G=Q_8=\{1,-1,i,-i,j,-j,k,-k \ | \ ij=k, jk=i, ki=j, i^2=j^2=k^2=-1\}$ be the quaternion group of order 8.
This presentation is not elegant in terms of generators and relations, but  very useful for our aims. In fact we can see immediately that $\Gamma_G$ has $|V|=|Q_8-Z(Q_8)|=|\{i,-i,j,-j,k,-k\}| =6$,  $|E|=12$, $\mathrm{deg}(i)=\mathrm{deg}(j)=\mathrm{deg}(k)=4$  and we confirm \cite[Propositions 2.3, 2.6]{aam} noting that $\Gamma_G$ is planar and regular. In order to compute $\nu_2$, fix $\xi=i$ and $x=j$. Here  $\rho_i(y) = \rho_i(j)=1$ for all $y \in V$ so that $\mu^i_j=0$. But when $\xi=i$ and $x=-i$, $1=\rho_i(y) < \rho_i(-i)=2$ for all $y \in V-\{-i\}$ and so $\mu^i_{-i}=1$. Since this argument may be repeated for $x=-j$ and $x=-k$, we conclude that $\nu_2=4$. This means that $c=\frac{4^{3/4}}{4^8 \ e^8}$. Here $\Omega=V$ confirms Theorem \ref{spec1} by 
$12 \ge  \left(\frac{4^{3/4}}{4^8 \ e^8}\right)  \cdot {24}^{3/4} .$
\end{ex}

The following is the first example of  Sobolev inequality for $\Gamma_G$.

\begin{cor} $\Gamma_G$ satisfies the thesis of Theorem \ref{sob1} with $R_0=\iota=\omega'=\sigma_{xy}=1$, $\mu_x=\mathrm{deg}(x)$, $v_0=1+\sum_{x \in \Omega} \mathrm{deg}(x)$, $n=\nu_2$.
\end{cor}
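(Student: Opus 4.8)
The plan is to verify, step by step, that the hypotheses of Theorem \ref{sob1} are met by $\Gamma_G$ with the stated choices of parameters, and then to read off the conclusion. The key point is that Theorem \ref{sob1} only requires the \emph{existence} of one finite subset $\Omega \subseteq V$ satisfying the isoperimetric estimate $\sigma(\partial\Omega) \ge c\,\mu(\Omega)^{1-\frac1n}$ for some $c, v_0 > 0$ and $n > 1$, together with the bound $\mu(\Omega) < v_0$. Since $\Gamma_G$ is the noncommuting graph of a finite group, it is unweighted, so $\sigma_{xy} = 1$ for every edge, $\omega' = \inf\{\sigma_{xy}\} = 1$, $\mu_x = \mathrm{deg}(x)$, and $\sigma(\partial\Omega) = |\partial\Omega|$; these identifications are already recorded in Section 1.

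First I would invoke Theorem \ref{spec1}: it gives, for \emph{every} finite $\Omega \subseteq G - Z(G)$, the inequality $|\partial\Omega| \ge c\,(\sum_{x\in\Omega}\mathrm{deg}(x))^{1-\frac1{\nu_2}}$ with the explicit constant $c = \omega^{1-\frac1{\nu_2}}/(4^{\nu_2+3}\,\nu_2\,e^{2\nu_2})$ and $n = \nu_2$; note $\nu_2 \ge 1$, and in fact $\nu_2 > 1$ since $\Gamma_G$ has diameter $2$ (so some vertex lies at distance $2$ from a base vertex, forcing $\mu_x^\xi < \mu_x$ for that vertex and hence the infimum defining $\nu_{R_0+1}$ to be strictly greater than $1$ — alternatively, the example of $Q_8$ shows $\nu_2 = 4$, and the isoperimetric inequality is vacuous unless $n > 1$). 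This supplies the isoperimetric hypothesis of Theorem \ref{sob1} with exactly this $c$ and $n = \nu_2$. Next, for the chosen finite subset I would take $\Omega$ itself and set $v_0 = 1 + \sum_{x\in\Omega}\mathrm{deg}(x) = 1 + \mu(\Omega)$; then trivially $\mu(\Omega) = \sum_{x\in\Omega}\mathrm{deg}(x) < v_0$, so the remaining hypothesis $\mu(\Omega) < v_0$ of Theorem \ref{sob1} holds. Finally, Corollary \ref{c:1} (together with Lemma \ref{l:1}) justifies $R_0 = \iota = 1$, and the unweighted structure gives $\omega' = \sigma_{xy} = 1$ and $\mu_x = \mathrm{deg}(x)$, so all the parameter identifications in the statement are consistent with the ambient setup.

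With all hypotheses of Theorem \ref{sob1} verified, I would then simply substitute these values into inequality $(\flat)$: the left-hand side becomes
\[
C(\nu_2,p)\left({\underset{x,y\in V}{\underset{y:y\sim x}\sum}}|f(y)-f(x)|^p\right)^{\frac1p} + c\,K(\Omega_0)\left(\sum_{x\in V}|f|^p\,\mathrm{deg}(x)\right)^{\frac1p},
\]
with $K(\Omega_0)$ given by the same two-case formula evaluated at $v_0 = 1 + \mu(\Omega)$ and $n = \nu_2$, and $c$ as in Theorem \ref{spec1}; the right-hand side is $\dfrac{c}{2^{1+\frac1{\nu_2}-\frac1p}}\bigl(\sum_{x\in V}|f|^{\frac{\nu_2 p}{\nu_2-p}}\bigr)^{\frac{\nu_2-p}{\nu_2 p}}$, valid for any $f \in \mathbb{R}^V$ of finite support (which on a finite graph is every function). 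There is essentially no obstacle here: the whole argument is a bookkeeping specialization, and the only subtlety worth flagging is to confirm $\nu_2 > 1$ so that the exponent $\frac{np}{n-p}$ and the constant $c$ make sense — this is guaranteed by $\mathrm{diam}\,\Gamma_G = 2$ exactly as in the proof of Lemma \ref{l:1}. Hence the corollary follows.
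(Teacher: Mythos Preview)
Your proposal is correct and follows exactly the approach the paper leaves implicit: the corollary is stated without proof, and the intended argument is precisely the direct specialization you carry out---invoke Theorem~\ref{spec1} (which rests on Lemma~\ref{l:1} and Corollary~\ref{c:1}) to supply the isoperimetric hypothesis of Theorem~\ref{sob1}, identify the unweighted parameters $\sigma_{xy}=\omega'=1$, $\mu_x=\mathrm{deg}(x)$, and choose $v_0=1+\mu(\Omega)$ so that $\mu(\Omega)<v_0$ trivially. Your additional care in flagging the condition $\nu_2>1$ (needed for the Sobolev exponent to make sense) is a point the paper glosses over, but since $V$ is finite the infimum defining $\nu_2$ is attained and the strict inequality $\mu_x^\xi<\mu_x$ forces $\nu_2>1$.
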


One of the most interesting problems is due to the optimality of the constants which appear in Theorem \ref{sob1}. This hasn't been discussed properly in \cite{chung4}, but the same authors have produced a series of papers in the last ten years on the problem of weakening  $P(\iota, \delta, R_0)$. Recently, some new  metric spaces are considered in \cite{ambrosio1, ambrosio2} and they seem to be the natural contexts where the above property can be generalized. We don't discuss this delicate aspect here.

\section{Second result}

The reader may observe that the condition $P(\delta, \iota, R_0)$ implies the isoperimetry, as explained in \cite[Theorem 6.3]{chung4}, but, on the other hand, $(\flat)$ has the form of a Sobolev--Poincar\'e inequality when  $K(\Omega_0)=0$ (see \cite{hebey} for details). This motivates us to characterize  a special situation, by means of another well known inequality of Nash type. The following theorem illustrates such equivalence.

\begin{thm}\label{t2}If a weighted graph $(\Gamma, \sigma)$ has  $P(\delta, \iota, R_0)$ with $\Omega \subseteq V$ of $\mu(\Omega) < \infty$ and $p=2n/n-2$, then the following conditions are equivalent for any $f \in \mathbb{R}^V$
\[ \left(\sum_{x \in V}|f(x)|^p \mu_x \right)^{\frac{2}{p}} \le A(p) \ {\underset{x,y \in V}{\underset{y : y \sim x} \sum}} | f(y)-f(x)|^2 \sigma_{xy}, \leqno{(\dag)}\]
\[ \left(\sum_{x \in V}|f(x)|^2 \mu_x \right)^{1+\frac{2}{n}} \le B(p) \ \left({\underset{x,y \in V}{\underset{y : y \sim x} \sum}} |f(y)-f(x)|^2 \sigma_{xy}\right) \ \left( \sum_{x \in V} |f(x)| \mu_x \right)^{\frac{4}{n}}, \leqno{(\dag\dag)}\]
where $A(p)$ and $B(p)$ are  (nonoptimal) constants depending only on $p$.
\end{thm}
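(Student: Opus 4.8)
The plan is to prove the two implications $(\dag)\Rightarrow(\dag\dag)$ and $(\dag\dag)\Rightarrow(\dag)$, which together give the equivalence. The hypothesis $P(\delta,\iota,R_0)$ (and the auxiliary set $\Omega$) enters only to fix the setting and, through $p=2n/(n-2)$, to commit us to the range $n>2$, which I assume throughout; the equivalence itself is the classical functional principle that a Sobolev inequality of ``dimension'' $n$ is interchangeable with the corresponding Nash inequality. Abbreviate $\mathcal{E}(f)=\sum_{y\sim x}|f(y)-f(x)|^{2}\sigma_{xy}$ and $\|f\|_{q}^{q}=\sum_{x\in V}|f(x)|^{q}\mu_{x}$; one may assume these are finite and, replacing $f$ by $|f|$ (which does not increase $\mathcal{E}$) and approximating by truncations, that $f\ge 0$ has finite support.

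For $(\dag)\Rightarrow(\dag\dag)$ I would simply interpolate $\|f\|_{2}$ between $\|f\|_{1}$ and $\|f\|_{p}$. Log-convexity of the $L^{q}(\mu)$-norms (H\"older on $(V,\mu)$) gives $\|f\|_{2}\le\|f\|_{1}^{1-\theta}\|f\|_{p}^{\theta}$ with $\tfrac12=(1-\theta)+\tfrac{\theta}{p}$; for $p=2n/(n-2)$ this forces $\theta=\tfrac{n}{n+2}$, and raising to the power $2(1+\tfrac2n)=\tfrac{2(n+2)}{n}$ simplifies the exponents to $\|f\|_{2}^{2(1+2/n)}\le\|f\|_{1}^{4/n}\|f\|_{p}^{2}$. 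Substituting $\|f\|_{p}^{2}\le A(p)\,\mathcal{E}(f)$ from $(\dag)$ then produces $(\dag\dag)$ with $B(p)=A(p)$. This direction is routine.

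The substantive direction is $(\dag\dag)\Rightarrow(\dag)$, for which I would use the dyadic truncation method. For $j\in\mathbb{Z}$ put $f_{j}=\bigl(f\wedge 2^{j+1}-2^{j}\bigr)_{+}$, the slice of $f$ between the levels $2^{j}$ and $2^{j+1}$, and $\mu_{j}=\mu(\{f>2^{j}\})$; then $0\le f_{j}\le 2^{j}$, $f_{j}=2^{j}$ on $\{f\ge 2^{j+1}\}$, $\sum_{j}f_{j}=f$ pointwise, $\|f_{j}\|_{1}\le 2^{j}\mu_{j}$ and $\|f_{j}\|_{2}^{2}\ge 4^{j}\mu_{j+1}$. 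Because $t\mapsto(t\wedge 2^{j+1}-2^{j})_{+}$ is nondecreasing and $1$-Lipschitz, along each edge $xy$ with $f(x)\le f(y)$ the nonnegative increments $f_{j}(y)-f_{j}(x)$ sum to $f(y)-f(x)$ and are each $\le f(y)-f(x)$, so $\sum_{j}|f_{j}(y)-f_{j}(x)|^{2}\le|f(y)-f(x)|^{2}$ and hence $\sum_{j}\mathcal{E}(f_{j})\le\mathcal{E}(f)$. Feeding each $f_{j}$ into $(\dag\dag)$ with the size estimates makes the powers of $2^{j}$ collapse to $4^{j}$, giving $\mathcal{E}(f_{j})\ge B(p)^{-1}4^{j}\mu_{j+1}^{(n+2)/n}\mu_{j}^{-4/n}$ (read as $0$ when $\mu_{j+1}=0$); summing over $j$ bounds $\mathcal{E}(f)$ below by $B(p)^{-1}\sum_{j}4^{j}\mu_{j+1}^{(n+2)/n}\mu_{j}^{-4/n}$. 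On the other hand, slicing $\|f\|_{p}^{p}$ along the same levels gives $\|f\|_{p}^{p}\le 2^{p}\sum_{j}(4^{j})^{n/(n-2)}\mu_{j}$, so it remains to establish the elementary inequality
\[\Bigl(\sum_{j}(4^{j})^{\frac{n}{n-2}}\mu_{j}\Bigr)^{\frac{n-2}{n}}\ \le\ C(n)\sum_{j}4^{j}\,\mu_{j+1}^{\frac{n+2}{n}}\,\mu_{j}^{-\frac{4}{n}}\]
for every nonincreasing nonnegative sequence $(\mu_{j})_{j\in\mathbb{Z}}$ (with the sums convergent), which reduces to a routine discrete convexity estimate, using that $\mu_{j+1}\le\mu_{j}$ also yields geometric control between consecutive levels. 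Chaining the three displays yields $(\dag)$ with $A(p)$ depending only on $p$.

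I expect the main obstacle to be this reverse implication, and within it the bookkeeping: checking edge-wise that the slice energies sum to at most $\mathcal{E}(f)$, handling the conventions at levels with $\mu_{j}=0$, and pinning down the right discrete convexity inequality for the level-set measures $\mu_{j}$. By contrast the forward implication and the verification of the exponents are straightforward, as is the reduction to nonnegative, finitely supported $f$.
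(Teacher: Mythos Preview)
Your proposal is correct and follows essentially the same architecture as the paper: H\"older interpolation of $\|f\|_2$ between $\|f\|_1$ and $\|f\|_p$ for $(\dag)\Rightarrow(\dag\dag)$, and dyadic level truncations $f_k=(\,|f|\wedge 2^{k+1}-2^k)_+$ for the converse. The ``routine discrete convexity estimate'' you defer is, in the paper, precisely the H\"older step $\sum_k a_{k+1}\le\bigl(\sum_k b_k\bigr)^{n/(n+2)}\bigl(\sum_k a_k^2\bigr)^{2/(n+2)}\le\bigl(\sum_k b_k\bigr)^{n/(n+2)}\bigl(\sum_k a_k\bigr)^{4/(n+2)}$ applied to $a_k=2^{pk}\mu(\{|f|\ge 2^k\})$ and $b_k=\mathcal{E}(f_k)$, which after rearranging gives $(\sum_k a_k)^{(n-2)/n}\le\sum_k b_k$; your edge-wise bound $\sum_j\mathcal{E}(f_j)\le\mathcal{E}(f)$ via the $1$-Lipschitz property of the slices is in fact cleaner than the paper's corresponding step.
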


\begin{proof}The property $P(\delta, \iota, R_0)$ is assumed, in order to be sure that there exists a graph satisfying an isoperimetric inequality (see Theorem \ref{iso1}), and, so, by Theorem \ref{sob1}, a Sobolev type inequality. In fact the proof of the equivalence among the conditions $(\dag)$ and $(\dag\dag)$, as we will see, doesn't use the property $P(\delta, \iota, R_0)$. On the other hand, we put it in the assumptions of the theorem for this precise motivation.

\medskip
\medskip 

$(\dag) \Rightarrow (\dag \dag)$.  We apply the H\"older inequality in the following form:
\[\sum_{x \in V}|f(x)|^2 \mu_x=\sum_{x \in V}|f(x)|^{\frac{p}{p-1}+\frac{p-2}{p-1}} \mu_x\]
\[ \le \left(\sum_{x \in V}\left(|f(x)|^{\frac{p}{p-1}}\right)^{p-1} \mu_x \right)^{\frac{1}{p-1}} \ \left( \sum_{x \in V} \left(|f(x)|^{\frac{p-2}{p-1}}\right)^{\frac{p-1}{p-2}}\mu_x \right)^{\frac{p-2}{p-1}},\]
that is,  
\[\sum_{x \in V}|f(x)|^2 \mu_x \le \left(\sum_{x \in V}|f(x)|^p \mu_x \right)^{\frac{1}{p-1}} \ \left( \sum_{x \in V} |f(x)| \mu_x \right)^{\frac{p-2}{p-1}}\]
and by $(\dag)$ we upper bound the right side of the above inequality with
\[\le \left(\left( A(p) \ {\underset{x,y \in V}{\underset{y : y \sim x} \sum}} | f(y)-f(x)|^2 \sigma_{xy}\right)^\frac{p}{2}\right)^{\frac{1}{p-1}} \left( \sum_{x \in V} |f(x)| \mu_x \right)^{\frac{p-2}{p-1}} \]
so that the $\left( 1+\frac{2}{n}\right)$th power implies
\[\left(\sum_{x \in V}|f(x)|^2 \mu_x \right)^{ 1+\frac{2}{n}}\]
\[ \le \left( A(p) \ {\underset{x,y \in V}{\underset{y : y \sim x} \sum}} | f(y)-f(x)|^2 \sigma_{xy}\right)^{^{\left( 1+\frac{2}{n}\right) \left(\frac{1}{p-1}\right) \left( \frac{p}{2}\right) }} \left( \sum_{x \in V} |f(x)| \mu_x \right)^{^{\left( 1+\frac{2}{n}\right)  \left(\frac{p-2}{p-1}\right)}} \]
\[\le  A(p) \ \left({\underset{x,y \in V}{\underset{y : y \sim x} \sum}} |f(y)-f(x)|^2 \sigma_{xy}\right) \ \left( \sum_{x \in V} |f(x)| \mu_x \right)^{\frac{4}{n}}, \]
since \[\left( 1+\frac{2}{n}\right) \left(\frac{1}{p-1}\right)\left( \frac{p}{2}\right)= \left( 1+\frac{2}{n}\right) \left(\frac{1}{\frac{2n}{n-2}-1}\right) \left(\frac{n}{n-2}\right)\]\[=\left(\frac{n+2}{n} \right)\left(\frac{n-2}{n+2} \right) \left(\frac{n}{n-2}\right)=1\]
and
\[\left( 1+\frac{2}{n}\right)  \left(\frac{p-2}{p-1}\right) = \left(\frac{n+2}{n} \right) \left(\frac{\frac{2n}{n-2}-2}{\frac{2n}{n-2}-1}\right) = \left(\frac{n+2}{n} \right)\left(\frac{\frac{4}{n-2}}{\frac{n+2}{n-2}}\right)=\frac{4}{n}.\] Therefore $(\dag \dag)$ follows with $A(p)=B(p)$.

\medskip
\medskip

$(\dag \dag) \Rightarrow (\dag)$. Given $f \in \mathbb{R}^V$ and $k \in \mathbb{Z}$, we define
$U_k=\{x \in V \ | \ |f(x)|<2^k \}$, $V_k=\{x \in V \ | \ 2^k \le |f(x)|<2^{k+1} \}$, $W_k=\{x \in V \ | \  |f(x)|\ge 2^{k+1} \}$ and
\[f_k(x)= \begin{cases}  0,  & \ \ \ \mathrm{if} \ \ x \in U_k\\
|f_k(x)|-2^k, & \ \ \ \mathrm{if} \ \ x \in V_k \\
2^k,  & \  \ \ \mathrm{if} \ \ x \in W_k.
\end{cases}\]
We note some useful properties of the way of writing $f_k(x)$ as above. Firstly,  $V=U_k \dot{\cup} V_k \dot{\cup} W_k$, that is, $V$ is the disjoint union of the sets $U_k$, $V_k$ and $W_k$. Secondly, $f_k(x)$ is zero over  $U_k$, and this doesn't give contribution in writing sums, while $f_k(x)$ is constant over $W_k$ once $k$ is fixed. Thirdly, we have by construction that $W_{k+1} \subseteq W_k$ for all $k \in \mathbb{Z}$. From the first of these properties, we get easily that 
\[{\underset{x,y \in V}{\underset{y : y \sim x} \sum}} |f_k(y)-f_k(x)|^2 \sigma_{xy} 
= {\underset{x,y \in U_k}{\underset{y : y \sim x} \sum}} |f_k(y)-f_k(x)|^2 \sigma_{xy}  + {\underset{x,y \in V_k}{\underset{y : y \sim x} \sum}} |f_k(y)-f_k(x)|^2 \sigma_{xy} \]
\[ + {\underset{x,y \in W_k}{\underset{y : y \sim x} \sum}} |f_k(y)-f_k(x)|^2 \sigma_{xy}  \le {\underset{x,y \in V_k}{\underset{y : y \sim x} \sum}} |f_k(y)-f_k(x)|^2 \sigma_{xy} \]
Now  we apply $(\dag \dag)$  to each $f_k(x)$ and, because of the above inequality, we find
\[ \left(\sum_{x \in V}|f_k(x)|^2 \mu_x \right)^{1+\frac{2}{n}} \le B(p)  \left( {\underset{x,y \in V_k}{\underset{y : y \sim x} \sum}} |f_k(y)-f_k(x)|^2 \sigma_{xy}\right)  \left( \sum_{x \in V} |f_k(x)| \mu_x \right)^{\frac{4}{n}}. \leqno{(*)}\]
Estimating the $\left(1+\frac{2}{n}\right)$th rooth of the term on the left side of $(*)$, we get 
\[  2^{2k} \ \underbrace{\sum_{x \in V} \chi_{_{W_k}}(x) \mu_x}_{\mu(W_k)}  \le  \sum_{x \in W_k}|f_k(x)|^2 \mu_x \le \sum_{x \in V}|f_k(x)|^2 \mu_x \]
and so we have the following lower bound for this term:
\[  \left(2^{2k} \ \sum_{x \in V} \chi_{_{W_k}}(x) \mu_x\right)^{1+\frac{2}{n}}  \le  \left( \sum_{x \in V}|f_k(x)|^2 \mu_x\right)^{1+\frac{2}{n}}.  \]
On the other hand, we  estimate the ($4/n$)th rooth of the following term in the right side of $(*)$:
\[\sum_{x \in V} |f_k(x)| \mu_x =\sum_{x \in U_k} |f_k(x)| \mu_x + \sum_{x \in V_k} |f_k(x)| \mu_x + \sum_{x \in W_k} |f_k(x)| \mu_x\]
\[= \sum_{x \in V_k} |f_k(x)| \mu_x + \sum_{x \in W_k} |f_k(x)| \mu_x   \le 2^k \sum_{x \in V} \chi_{_{V_k}}(x) \mu_x + 2^k \sum_{x \in V} \chi_{_{W_k}}(x) \mu_x \]
\[\le 2^k \sum_{x \in V} \chi_{_{W_{k-1}}}(x) \mu_x\]
and so we have the following upper bound for this term:
\[\left(\sum_{x \in V} |f_k(x)| \mu_x\right)^{\frac{4}{n}} \le \left(2^k \sum_{x \in V} \chi_{_{W_{k-1}}}(x) \mu_x\right)^{\frac{4}{n}}.\]
We conclude that $(*)$  implies
\[\left(2^{2k} \ \sum_{x \in V} \chi_{_{W_k}}(x) \mu_x\right)^{1+\frac{2}{n}}  \le B(p)  \left( {\underset{x,y \in V_k}{\underset{y : y \sim x} \sum}} |f_k(y)-f_k(x)|^2 \sigma_{xy}\right)   \left(2^k \sum_{x \in V} \chi_{_{W_{k-1}}}(x) \mu_x\right)^{\frac{4}{n}}. \leqno{(**)}\]
In order to manipulate the terms which appear in $(**)$, we denote
\[a_k= 2^{pk} \sum_{x \in V} \chi_{_{W_{k-1}}}(x) \mu_x \ \  \mathrm{and} \  \ b_k= {\underset{x,y \in V_k}{\underset{y : y \sim x} \sum}} |f_k(y)-f_k(x)|^2 \sigma_{xy}, \]
where $p$ is always equal to $2n/(n-2)$. Now
\[a_{k+1}= 2^{pk+p} \sum_{x \in V} \chi_{_{W_k}}(x) \mu_x\]
and we rewrite $(**)$ as
\[2^{(2k-pk-p) \ \left(1+\frac{2}{n}\right) } \ \ (a_{k+1})^{1+\frac{2}{n}} \le   
 B(p)  \left( {\underset{x,y \in V_k}{\underset{y : y \sim x} \sum}} |f_k(y)-f_k(x)|^2 \sigma_{xy}\right)   \ 2^{\frac{4k}{n}-\frac{4pk}{n}} (a_k)^{\frac{4}{n}},\]
that is,
\[ (a_{k+1})^{1+\frac{2}{n}} \le   
\ 2^{\frac{4k}{n}-\frac{4pk}{n}+(-2k+pk+p) \ \left(1+\frac{2}{n}\right) }  \  B(p)  \left( {\underset{x,y \in V_k}{\underset{y : y \sim x} \sum}} |f_k(y)-f_k(x)|^2 \sigma_{xy}\right)   \ (a_k)^{\frac{4}{n}}.\]
Since $\left(1+\frac{2}{n}\right) \left(\frac{n}{n+2} \right)=1$, we do the $\left(\frac{n}{n+2} \right)$th power and get
\[ a_{k+1} \le   
\ 2^{\frac{4k}{n+2} (1-p)-2k+pk+p  }  \  B(p)^{\frac{n}{n+2}} \ \left( {\underset{x,y \in V_k}{\underset{y : y \sim x} \sum}} |f_k(y)-f_k(x)|^2 \sigma_{xy}\right)^{\frac{n}{n+2} }   \ (a_k)^{ \frac{4}{n+2} },\]
but 
\[\frac{4k}{n+2}(1-p) -2k+pk+p=\frac{4k}{\frac{2p}{p-2}+2}(1-p) -2k+pk+p\]
\[ \frac{4k}{\frac{4p-4}{p-2}}(1-p) -2k+pk+p =\left(\frac{p-2}{p-1}\right) (1-p) k -2k+pk+p=-k(p-2)-2k+pk+p=p  \]
and so 
\[ a_{k+1} \le   
\ 2^p  \  B(p)^{\frac{n}{n+2}} \ \left( {\underset{x,y \in V_k}{\underset{y : y \sim x} \sum}} |f_k(y)-f_k(x)|^2 \sigma_{xy}\right)^{\frac{n}{n+2} }   \ (a_k)^{ \frac{4}{n+2} }. \leqno{(\sharp)}\]
Until now, we have shown that $(\sharp)$ follows from $(*)$ via $(**)$. But we may sum $(\sharp)$ over $k \in \mathbb{Z}$ and get
\[\sum_{k \in  \mathbb{Z}} a_k=\sum_{k \in  \mathbb{Z}} a_{k+1} \le   
\ 2^p  \  B(p)^{\frac{n}{n+2}} \ \sum_{k \in  \mathbb{Z}} (b_k)^{\frac{n}{n+2}}   (a^2_k)^{\frac{2}{n+2}} \]
and applying the H\"older inequality with conjugate exponents $P=\frac{n}{n+2}$ and $Q=1-\frac{n}{n+2}=\frac{2}{n+2}$,  this quantity is upper bounded by
\[ \le    2^p  \  B(p)^{\frac{n}{n+2}} \ \left(\sum_{k \in  \mathbb{Z}} b_k  \right)^{\frac{n}{n+2}}   \left( \sum_{k \in  \mathbb{Z}} a^2_k  \right)^{\frac{2}{n+2}}\]
where we may even upper bound the last term  a priori, getting 
\[ \le    2^p  \  B(p)^{\frac{n}{n+2}} \ \left(\sum_{k \in  \mathbb{Z}} b_k  \right)^{\frac{n}{n+2}}   \left( \sum_{k \in  \mathbb{Z}} a_k  \right)^{\frac{4}{n+2}}.\]
This allows us to conclude
\[\sum_{k \in  \mathbb{Z}} a_k \le    2^p  \  B(p)^{\frac{n}{n+2}} \ \left(\sum_{k \in  \mathbb{Z}} b_k  \right)^{\frac{n}{n+2}}   \left( \sum_{k \in  \mathbb{Z}} a_k  \right)^{\frac{4}{n+2}} \]
hence
\[\sum_{k \in  \mathbb{Z}} a_k \le  \left(2^p \  B(p)^{\frac{n}{n+2}} \   \left(\sum_{k \in  \mathbb{Z}} b_k  \right)^{\frac{n}{n+2}}\right)^{\frac{n+2}{n-2}}= 2^\frac{p(n+2)}{(n-2)} \  B(p)^{\frac{n}{n-2}} \   \left(\sum_{k \in  \mathbb{Z}} b_k  \right)^{\frac{n}{n-2}}. \leqno{(\sharp \sharp)} \]
Now, on a hand $\bigcup_{k \in \mathbb{Z}}V_k=V$ and so
\[\sum_{k \in  \mathbb{Z}} b_k= \sum_{k \in \mathbb{Z}} \left({\underset{x,y \in V_k}{\underset{y : y \sim x} \sum}} |f_k(y)-f_k(x)|^2 \sigma_{xy}\right)  \le {\underset{x,y \in V}{\underset{y : y \sim x} \sum}} |f(y)-f(x)|^2 \sigma_{xy}, \]
  on another hand, we note that $\bigcup_{k \in \mathbb{Z}}V_k=V$, that $V_k=W_{k-1} - W_k$ and that the restriction $|f|^p(V_k) \le (2^{k+1})^p=2^p(2^{kp})$, and so
 \[  \sum_{x \in V} |f(x)|^p \mu_x =  \sum_{k \in \mathbb{Z}}\left( \sum_{x \in V_k} |f(x)|^p \mu_x\right)  \le 
\sum_{k \in \mathbb{Z}}2^p  (2^{kp})  \underbrace{\left(\sum_{x \in V} \chi_{_{V_k}}(x)\mu_x\right)}_{\mu(V_k)}\]
\[=2^p \ \sum_{k \in \mathbb{Z}}  2^{kp}  \underbrace{\left(\sum_{x \in V} \chi_{_{W_{k-1}-W_k}}(x)\mu_x\right)}_{\mu(W_{k-1}-W_k)}= 2^p \sum_{k \in \mathbb{Z}} \left(a_k  -\frac{2^{(k+1)p}}{2^p} \underbrace{\left(\sum_{x \in V} \chi_{_{W_k}}(x)\mu_x\right)}_{\mu(W_k)} \right) \]
\[=2^p \sum_{k \in \mathbb{Z}} \left( a_k - \frac{a_{k+1}}{2^p}\right)=2^p\left(1-\frac{1}{2^p}\right) \sum_{k \in \mathbb{Z}}a_k=(2^p-1)\sum_{k \in \mathbb{Z}}a_k.\]
Therefore we combine these last two inequalities with $(\sharp \sharp)$ and find that
\[\left( \sum_{x \in V} |f(x)|^p \mu_x\right)^{\frac{2}{p}} \le (2^p-1)^{\frac{2}{p}} \ \ 2^{2(p-1)} \ B(p) \ {\underset{x,y \in V}{\underset{y : y \sim x} \sum}} |f(y)-f(x)|^2 \sigma_{xy}, \]
which gives exactly $(\dag)$ when $A(p)=(2^p-1)^{\frac{2}{p}} \ \ 2^{2(p-1)} \ B(p) $. 
\end{proof}

The following corollary shows a Nash inequality for $\Gamma_G$ for the first time.

\begin{cor} $\Gamma_G$ satisfies the thesis of Theorem \ref{t2} with $n=\nu_2$, $\mu_x=\mathrm{deg}(x)$, $\sigma_{xy}=\iota=R_0=\delta=1$.
\end{cor}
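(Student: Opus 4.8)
The plan is to obtain this corollary as a direct specialization of Theorem~\ref{t2} to the noncommuting graph $\Gamma_G$, in the same way that Theorem~\ref{spec1} was deduced from Theorem~\ref{iso1}. First I would recall that, by Lemma~\ref{l:1} together with Corollary~\ref{c:1}, the graph $\Gamma_G$ satisfies condition $P(\delta,\iota,R_0)$ of Definition~\ref{p} with the explicit choices $R_0=\delta=\iota=1$ and $n=\nu_2$. Next I would use that $\Gamma_G$ carries neither weight nor orientation, so that $\sigma_{xy}=1$ for every $xy\in E$ and $\mu_x=\mathrm{deg}(x)$ for every vertex $x\in V=G-Z(G)$; these are precisely the parameter values announced in the statement.

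Then I would check that the two remaining hypotheses of Theorem~\ref{t2} are automatically satisfied in this setting. Since $G$ is finite, $V=G-Z(G)$ is finite, so every $\Omega\subseteq V$ fulfils $\mu(\Omega)<\infty$ with no further assumption; and the Sobolev exponent $p=2n/(n-2)=2\nu_2/(\nu_2-2)$ is well defined once $n=\nu_2>2$. With these identifications in place, Theorem~\ref{t2} applied verbatim to $(\Gamma_G,\sigma\equiv 1)$ yields, for every $f\in\mathbb{R}^V$, the equivalence of
\[
\Big(\sum_{x\in V}|f(x)|^{p}\,\mathrm{deg}(x)\Big)^{\frac{2}{p}}\ \le\ A(p)\ {\underset{x,y \in V}{\underset{y : y \sim x} \sum}}|f(y)-f(x)|^{2}
\]
and
\[
\Big(\sum_{x\in V}|f(x)|^{2}\,\mathrm{deg}(x)\Big)^{1+\frac{2}{\nu_2}}\ \le\ B(p)\,\Big({\underset{x,y \in V}{\underset{y : y \sim x} \sum}}|f(y)-f(x)|^{2}\Big)\,\Big(\sum_{x\in V}|f(x)|\,\mathrm{deg}(x)\Big)^{\frac{4}{\nu_2}},
\]
where $A(p)$ and $B(p)$ are the (nonoptimal) constants produced in the proof of Theorem~\ref{t2}, related by $A(p)=(2^{p}-1)^{2/p}\,2^{2(p-1)}\,B(p)$.

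I do not expect a genuine obstacle here: the whole content already lies in Lemma~\ref{l:1}, Corollary~\ref{c:1} and Theorem~\ref{t2}, and the corollary is essentially a bookkeeping statement reading off the unweighted instance $\sigma_{xy}=\iota=R_0=\delta=1$, $\mu_x=\mathrm{deg}(x)$, $n=\nu_2$. The only point deserving a word of care is the one already implicit in Theorem~\ref{t2} through the choice $p=2n/(n-2)$, namely that one needs $n>2$; for $\Gamma_G$ this is the requirement $\nu_2>2$, which I would record explicitly rather than leave silent.
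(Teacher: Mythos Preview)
Your proposal is correct and follows exactly the paper's own approach: the paper's proof reads simply ``Application of definitions, Lemma~\ref{l:1} and Theorem~\ref{t2}'', which is precisely what you do, only with more detail spelled out. Your explicit remark that one needs $\nu_2>2$ for $p=2n/(n-2)$ to make sense is a careful addition that the paper leaves implicit.
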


\begin{proof}Application of definitions, Lemma \ref{l:1} and Theorem \ref{t2}.
\end{proof}

\section*{Acknowledgements}
The second author thanks CAPES for the project 061/2013 and the institutes of mathematics of UFRJ and IMPA  in Rio de Janeiro (Brasil) for their hospitality. In particular, we are very grateful to Prof. Walcy Santos, who allowed us to collaborate intensively on the subject of the present paper.

\end{document}